\DeclareMathOperator{\lap}{\Delta}
\DeclareMathOperator{\tr}{tr}
\newcommand{\mexhal}{{\rm e}^{h(\alpha-1)}}
\newcommand{\exhal}{{\rm e}^{-h(\alpha-1)}}
\newcommand{\exh}{{\rm e}^{-h}}
\DeclareMathOperator{\divg}{div}
\DeclareMathOperator{\grad}{\nabla}
\DeclareMathOperator{\n}{\mathbf{n}}
\newcommand{\N}{\mathbb{N}}
\newcommand{\R}{\mathbb{R}}
\newcommand{\dm}{\mathrm{d}m}
\newcommand{\dvg}{\mathrm{d}v_g}
\newcommand{\Ric}{\mathrm{Ric}}
\begin{document}

\allowdisplaybreaks

\newcommand{\arXivNumber}{2002.03698}

\renewcommand{\thefootnote}{}

\renewcommand{\PaperNumber}{090}

\FirstPageHeading

\ShortArticleName{About Bounds for Eigenvalues of the Laplacian with Density}

\ArticleName{About Bounds for Eigenvalues of the Laplacian\\ with Density\footnote{This paper is a~contribution to the Special Issue on Scalar and Ricci Curvature in honor of Misha Gromov on his 75th Birthday. The full collection is available at \href{https://www.emis.de/journals/SIGMA/Gromov.html}{https://www.emis.de/journals/SIGMA/Gromov.html}}}

\Author{A\"issatou Moss\`ele NDIAYE}

\AuthorNameForHeading{A.M.~Ndiaye}

\Address{Institut de Math\'ematiques, Universit\'e de Neuch\^atel, Switzerland}
\Email{\href{mailto:aissatou.ndiaye@unine.ch}{aissatou.ndiaye@unine.ch}}

\ArticleDates{Received February 13, 2020, in final form September 01, 2020; Published online September 25, 2020}

\Abstract{Let $M$ denote a compact, connected Riemannian manifold of dimension $n\in\N$. We assume that $ M$ has a smooth and connected boundary. Denote by $g$ and ${\rm d}v_g$ respectively, the Riemannian metric on $M$ and the associated volume element. Let $\Delta$ be the Laplace operator on $M$ equipped with the weighted volume form ${\rm d}m:= {\rm e}^{-h}\,{\rm d}v_g$. We are interested in the operator $L_h\cdot:={\rm e}^{-h(\alpha-1)}(\Delta\cdot +\alpha g(\nabla h,\nabla\cdot))$, where $\alpha > 1$ and $h\in C^2(M)$ are given. The main result in this paper states about the existence of upper bounds for the eigenvalues of the weighted Laplacian $L_h$ with the Neumann boundary condition if the boundary is non-empty.}

\Keywords{eigenvalue; Laplacian; density; Cheeger inequality; upper bounds}

\Classification{35P15; 58J50}

\renewcommand{\thefootnote}{\arabic{footnote}}
\setcounter{footnote}{0}

\section{Introduction}
Let $(M, g)$ be a compact connected $n$-dimensional Riemannian manifold. Let $h\in C^2(M)$ and~$\rho$ be the positive function define by $\rho:={\rm e}^{-h}$. Let $\dvg$, $\lap$ and $\nabla$ denote respectively, the Riemannian volume measure, the Laplace and the gradient operator on $(M,g)$. For simplicity, we also denote by $\dvg$ the volume element for the induced metric on $\partial M$. We define the Laplacian with negative sign, that is the negative divergence of the gradient operator.

The Witten Laplacian (also called drifting, weighted or Bakry--Emery Laplacian) with respect to the weighted volume measure $\rho\dvg$ is define by
\begin{equation}\label{witten}
\lap^W:=\lap\cdot + g(\grad h,\grad\cdot).
\end{equation}
We designate by $\{\lambda_k(\rho, \rho)\}_{k\geqslant 0}$ the spectrum of the operator in \eqref{witten} under Neumann conditions if the boundary is non-empty. Let $S_k$ be the set of all $ k$-dimensional vector subspaces of $H^1(M)$, the spectrum consists of a non-decreasing sequence of eigenvalues variationally defined by
\begin{equation}\label{cwe}
\lambda_k(\rho, \rho)=\inf_{V\in S_{k+1}}\sup_{u\in V\backslash \{0\}}\frac{\int_{M} |\nabla u|^2\rho \, \dvg}{\int_{M} u^2\rho \, \dvg},
\end{equation}
for all $k\geqslant 0$.

In recent years, the Witten Laplacian received much attention from many mathematicians (see \cite{du2017,du2016,nier2005hypoelliptic,huang2011,li2012perelman, lu2010, ma2008, xia2014} and the references therein), in particularly the classical research topic of estimating eigenvalues.

When $h$ is a constant, the Witten Laplacian is exactly the Laplacian.
Another spectrum has a similar characterisation with the one of the Witten Laplacian: the spectrum of the Laplacian associated with the metric $\rho^{\frac{2}{n}}g$, which is conformal to $g$. It is natural to denote its spectrum by $\big\{\lambda_k\big(\rho, \rho^{\frac{n-2}{n}}\big)\big\}_{k\geqslant 0}$, since the eigenvalues are variationally characterised by
\begin{equation}\label{cce}
\lambda_k\big(\rho, \rho^{\frac{n-2}{n}}\big)=\inf_{V\in S_{k+1}}\sup_{u\in V\backslash \{0\}}\frac{\int_{M} |\nabla u|^2\rho^{\frac{n-2}{n}} \,\dvg}{\int_{M} u^2\rho\, \dvg}.
\end{equation}

In the present work, we are interested in the expanded eigenvalue problem of the Dirichlet energy functional weighted by
$\rho^\alpha$, with respect to the $L^2$ inner product weighted by $\rho$, where $\alpha\geqslant 0$ is a given constant. These eigenvalues are those of the operator $L_{\rho}\cdot= L_h\cdot:=-\rho^{-1}\divg (\rho^\alpha\grad \cdot )= {\rm e}^{-h(\alpha-1)} ( \lap\cdot +\alpha g(\grad h,\grad\cdot) )$
on $M$ endowed with the weighted volume form $\dm:=\rho\,\dvg$. The spectrum consists of an unbounded increasing sequence of eigenvalues
\[ \operatorname{Spec}( L_h) = \big\{0 = \lambda_0(\rho,\rho^\alpha) < \lambda_1(\rho,\rho^\alpha) \leqslant \lambda_2 (\rho,\rho^\alpha) \leqslant \cdots \leqslant \lambda_k(\rho,\rho^\alpha) \leqslant \cdots \big\},\]
 which are given by
\[ \lambda_k(\rho,\rho^\alpha)=\inf_{V\in S_{k+1}}\sup_{u\in V\backslash \{0\}}\frac{\int_{M} |\nabla u|^2\rho^\alpha\, \dvg}{\int_{M} u^2\rho\, \dvg},\]
for all $k\geqslant 0$. As already mentioned, $S_k$ is the set of all $k$-dimensional vector subspaces of~$H^1(M)$. The particular cases where $\alpha=1$ and $\alpha=\frac{n-2}{n}$ correspond to the problems mentioned above, whose eigenvalues are respectively given by~\eqref{cwe} and~\eqref{cce}.

A main interest is to investigate the interplay between the geometry of $(M, g)$ and the effect of the weights, looking at the behaviour of $\lambda_k(\rho,\rho^\alpha)$, among densities $ \rho$ of fixed total mass.
The more general problem where the Dirichlet energy functional is weighted by
a positive function~$\sigma$, not necessarily related to $\rho$ is presented by Colbois and El~Soufi in \cite{Colbois2019}.

In the aforementioned paper, Colbois and El~Soufi exhibit an upper bound for the singular case where $\alpha=0$ \cite[Corollary~4.1]{Colbois2019}:
\[ \lambda_k(\rho,1)|M|^\frac{2}{n}\leqslant C_nk^{\frac{2}{n}},\]
where $C_n$ depends only on the dimension $n$. Whereas, in \cite[Theorem~5.2]{colboissoufisavo}, Colbois, El Soufi and Savo prove that, when $\alpha=1$, there is no upper bound among all manifolds. Indeed, they show that, on a compact revolution manifold, one has $\lambda_1(\rho,\rho)$ as large as desired. In their work in~\cite{LucSalam}, Kouzayha and Pétiard give an upper bound for $\lambda_k(\rho,\rho^\alpha)$, when $\alpha\in\big(0,\frac{n-2}{n}\big]$ and prove that there is none for $\lambda_1(\rho,\rho^\alpha)$ when $\alpha$ runs over the interval $\big(\frac{n-2}{n}, 1\big)$.

In this work, we treat the remaining cases, that is when $\alpha>1$. We prove, as conjectured in \cite[Remark~3]{LucSalam}, that there is no upper bound for $\lambda_1(\rho,\rho^\alpha)$, in the class of manifolds $M$ with convex boundary and positive Ricci curvature.
 \begin{theorem}\label{main1}
Let $\alpha>1$ be a given real constant. Let $(M, g)$ be a compact connected Riemannian manifold of dimension $n$, whose Ricci curvature satisfies $\Ric\geqslant \kappa$, for some positive constant~$\kappa$. If~$M$ has convex boundary, then there exists a sequence of densities $\{\rho_j\}_{ j\geqslant 2}$ and $j_0\in \N$, such that
\[\lambda_1(\rho_j,\rho_j^\alpha)\left(\frac{|M|}{\int_M \rho_j\dvg} \right)^{\alpha-1}\geqslant 2\kappa j,\qquad \forall\, j\geqslant j_0. \]
Here, $|M|$ denotes the volume of $M$.
\end{theorem}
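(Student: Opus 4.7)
The quantity $\lambda_1(\rho,\rho^\alpha)(|M|/\int_M\rho\,\dvg)^{\alpha-1}$ is invariant under $\rho\mapsto c\rho$, so I would normalize $\int_M\rho_j\,\dvg=|M|$ and reduce the claim to constructing densities with $\lambda_1(\rho_j,\rho_j^\alpha)\geq 2\kappa j$ for $j\geq j_0$. The strategy is to concentrate the density sharply at an interior point.

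Fix an interior point $p_0\in M\setminus\partial M$ and a smooth nonnegative function $\phi\colon M\to\R$ with $\phi(p_0)=0$ a strict global minimum and $\nabla^2_g\phi(p_0)$ positive definite (for instance, $\phi=\tfrac{1}{2}d_g(p_0,\cdot)^2$ near $p_0$, extended smoothly). Set
\[
\rho_j(x)=\frac{|M|}{\int_M{\rm e}^{-j\phi}\,\dvg}\,{\rm e}^{-j\phi(x)}.
\]
By Laplace's method, $\int_M{\rm e}^{-\beta j\phi}\,\dvg\sim(2\pi/(\beta j))^{n/2}/\sqrt{\det\nabla^2_g\phi(p_0)}$ for each $\beta>0$, so the normalizing factor is of order $j^{n/2}$. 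Rescaling coordinates around $p_0$ by $y=\sqrt{j}\exp_{p_0}^{-1}(\cdot)$ converts both integrals of the Rayleigh quotient into Gaussian integrals on $\R^n$; for test functions $u$ localized near $p_0$ one gets, to leading order,
\[
\frac{\int_M|\nabla u|^2\rho_j^\alpha\,\dvg}{\int_M u^2\rho_j\,\dvg}\;\sim\;C\cdot j^{1+n(\alpha-1)/2}\cdot\frac{\int_{\R^n}|\nabla_y u|^2{\rm e}^{-\alpha Q(y)/2}\,dy}{\int_{\R^n}u^2{\rm e}^{-Q(y)/2}\,dy},
\]
with $Q(y)=\langle\nabla^2_g\phi(p_0)y,y\rangle$ and $C>0$ depending only on $\alpha$ and $\nabla^2_g\phi(p_0)$. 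Taking infima yields $\lambda_1(\rho_j,\rho_j^\alpha)\gtrsim j^{1+n(\alpha-1)/2}$, which exceeds $2\kappa j$ for all sufficiently large $j$ since $1+n(\alpha-1)/2>1$ whenever $\alpha>1$.

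\textbf{Main obstacle.} Making the above asymptotic rigorous is the technical core. One must (i) establish strict positivity of the limiting Euclidean eigenvalue $\widetilde\lambda_1=\inf\int_{\R^n}|\nabla u|^2{\rm e}^{-\alpha Q/2}\,dy/\int_{\R^n}u^2{\rm e}^{-Q/2}\,dy$, which follows from the discreteness of the spectrum of the associated self-adjoint elliptic operator whose kernel consists of the constants; (ii) prove a concentration statement showing that minimizing sequences for $\lambda_1(\rho_j,\rho_j^\alpha)$ localize near $p_0$, so that test functions supported where $\rho_j$ is exponentially small cannot defeat the local bound; and (iii) uniformly control the remainder in Laplace's expansion along such sequences. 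A direct Bakry--\'Emery--Reilly-type route via $L_h$ with $h_j=j\phi$ --- where strong convexity of $\phi$ gives $\Ric+\alpha\nabla^2 h_j\geq\kappa+\alpha j\lambda_{\min}(\nabla^2\phi)$ --- would yield a clean Lichnerowicz-type bound on $\lambda_1(\rho_j^\alpha,\rho_j^\alpha)$, but runs into two difficulties: the mismatch between the $L^2$-inner-product weight $\rho$ and the Dirichlet-energy weight $\rho^\alpha$ obstructs its transfer to $\lambda_1(\rho_j,\rho_j^\alpha)$, and on a general $M$ satisfying the hypotheses only \emph{local} strong convexity of $\phi$ near $p_0$ is guaranteed. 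The concentration/Laplace analysis sidesteps both issues.
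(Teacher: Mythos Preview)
Your concentration strategy does not work, and the gap is precisely at step~(ii): for $\alpha>1$ minimizing sequences for $\lambda_1(\rho_j,\rho_j^\alpha)$ \emph{do not} localize near $p_0$; they delocalize, and in fact $\lambda_1(\rho_j,\rho_j^\alpha)\to 0$ for your densities. The reason is that $\rho_j^\alpha/\rho_j=\rho_j^{\alpha-1}$, and since $\alpha>1$ this ratio is exponentially \emph{small} wherever $\rho_j$ is small. Concretely, pick a small ball $B$ with $\overline{B}\subset M\setminus\{p_0\}$ on which $\phi\in[\phi_0-\delta,\phi_0+\delta]$ with $0<\delta<\tfrac{\alpha-1}{\alpha+1}\phi_0$, and take any nonzero $u\in C_c^\infty(B)$. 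Replacing $u$ by $u-\bar u$ with $\bar u=\tfrac{1}{|M|}\int_M u\,\rho_j\,\dvg$ (which is $O\big(j^{n/2}e^{-j(\phi_0-\delta)}\big)$, hence negligible) one estimates
\[
\frac{\int_M|\nabla u|^2\rho_j^\alpha\,\dvg}{\int_M (u-\bar u)^2\rho_j\,\dvg}
\;\leq\; C\,j^{\,n(\alpha-1)/2}\,e^{-j\big[(\alpha-1)\phi_0-(\alpha+1)\delta\big]}\cdot\frac{\|\nabla u\|_{L^2(B)}^2}{\|u\|_{L^2(B)}^2}\;\longrightarrow\;0.
\]
Thus the Rayleigh quotient can be made arbitrarily small with admissible test functions, and your asymptotic $\lambda_1(\rho_j,\rho_j^\alpha)\gtrsim j^{1+n(\alpha-1)/2}$ is false. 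The Laplace-method computation you wrote is correct for functions that happen to concentrate at $p_0$, but those are not the competitors that determine the infimum when $\alpha>1$. (Your intuition would be the right one in the regime $\alpha<1$, where $\rho^{\alpha-1}$ blows up away from the peak.)

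The paper proceeds in essentially the opposite direction. It takes the Bakry--\'Emery/Reilly route that you set aside, proving an extended Reilly identity for $L_h$ and then choosing $h_j(x)=e^{-d(x_0,x)^2/j}+C_j$ with a carefully tuned constant $C_j$. These $h_j$ do \emph{not} concentrate: they converge uniformly to a constant as $j\to\infty$, the constant being selected so that the factor $e^{h_j(\alpha-1)}n(z+1)-e^{-h_j(\alpha-1)}$ arising from the Reilly inequality is $O(1/j)$. The mismatch between the $L^2$ weight $\rho$ and the energy weight $\rho^\alpha$ is exactly what produces this factor, and the construction exploits it rather than avoiding it; combined with $\Ric\geq\kappa$ and $|\nabla h_j|^2-\alpha D^2h_j\leq 2\alpha/j$, this yields the linear lower bound $\tilde\lambda\geq Aj$. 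So the obstacle you flagged for the Reilly approach is genuinely handled in the paper, whereas the obstacle you flagged for your own approach is fatal.
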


 This inequality provides a lower bound that grows linearly to infinity in $j$ as $j \rightarrow \infty$, showing that with respect to these densities, $ \lambda_1(\rho,\rho^\alpha)$ becomes as large as desired.
Unfortunately, I do not know any other way to prove it, than the following long and painful computation.

Our aim is to show that, there exists a family of densities $\rho_j={\rm e}^{-h_j}$, $j\in \N$, such that their corresponding first non-zero eigenvalues become as large as desired. For this, we use the extended Reilly formula presented in Theorem~\ref{thmreilly}, to provide a lower bound that grows linearly to infinity in $j$, as $j \rightarrow \infty$.

Let $(M,g)$ be a compact connected Riemannian manifold of dimension $n$ with smooth boundary $\partial M$. Let $D^2$ denote the Hessian tensor, $\grad_{\partial}$ the tangential gradient, $\lap_{\partial}$ the Laplace--Beltrami operator on $\partial M$ and $\partial_{\n}$ the derivative with respect to the outer unit normal vector ${\n}$ to $\partial M$.
The second fundamental form on $\partial M$ is defined by $I(X, Y ):= g(\nabla_X \n, Y )$ for any vector fields~$X$ and $Y$. Let $H := \tr I$ denote the mean curvature of $\partial M$ and $\Ric$ the Ricci curvature on~$M$.

\begin{theorem}[Reilly formula]\label{thmreilly}
Consider $M$ equipped with the weighted volume form $\dm =\exh\, \dvg$ for some $h\in C^2(M)$. Then, for every $u \in C^\infty(M)$, we have:
\begin{gather}
\int_M\mexhal| L_h u|^2-\exhal |D^2u|^2\,\dm
=\int_M\exhal \left( \Ric+\alpha D^2h\right)\left(\grad u,\grad u\right)\dm\nonumber\\
\qquad{}+\int_{\partial M}\exhal g\big( \partial_{\n} u,H\partial_n u-\alpha g(\grad h,\grad u)-\lap_{\partial } u\big)\,\dm\nonumber\\
\qquad{} +\int_{\partial M}\exhal \big[I(\grad_{\partial }u,\grad_{\partial }u)-g(\grad_\partial u,\grad_\partial \partial_{\n} u)\big]\,\dm.
\label{reilly}
\end{gather}
\end{theorem}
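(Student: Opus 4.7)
The plan is to reduce the statement to a classical Reilly-type identity for the Bakry--Emery Laplacian with respect to the measure ${\rm e}^{-\alpha h}\,\dvg$. To this end I set
\[
\tilde L u := \lap u + \alpha\, g(\grad h,\grad u),
\]
so that $L_h u = \exhal\,\tilde L u$ and $\mexhal|L_h u|^2\,\dm = {\rm e}^{-\alpha h}|\tilde L u|^2\,\dvg$, and likewise every other factor of $\exhal$ combines with $\exh$ into the single weight ${\rm e}^{-\alpha h}$. In these variables \eqref{reilly} becomes the usual Reilly-type formula with drift, which I then plan to prove by the Bochner-plus-integration-by-parts machinery and translate back to $\dm$ at the very end.

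The analytic engine is a weighted Bochner identity:
\[
\tilde L\!\left(\tfrac{1}{2}|\grad u|^2\right) + |D^2u|^2 = g(\grad u,\grad \tilde L u) - (\Ric+\alpha D^2h)(\grad u,\grad u).
\]
This follows from the classical identity $\lap(\tfrac12|\grad u|^2) = g(\grad u,\grad\lap u) - \Ric(\grad u,\grad u) - |D^2u|^2$ (valid under the convention $\lap = -\divg\grad$) after adding the drift contribution $\alpha\, g(\grad h,\grad(\tfrac12|\grad u|^2)) = \alpha D^2u(\grad h,\grad u)$ and observing that it cancels against the symmetric term produced by $\grad\tilde L u$, leaving only $-\alpha D^2h(\grad u,\grad u)$. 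Multiplying by ${\rm e}^{-\alpha h}$, integrating over $M$, and applying the self-adjointness identity
\[
\int_M v\,\tilde L u\,{\rm e}^{-\alpha h}\dvg = \int_M g(\grad v,\grad u)\,{\rm e}^{-\alpha h}\dvg - \int_{\partial M} v\,\partial_{\n} u\,{\rm e}^{-\alpha h}\dvg
\]
first with $v=1$ and the role of $u$ played by $\tfrac12|\grad u|^2$, and then with $v=\tilde L u$, collapses the integrated Bochner identity to
\[
\int_M\!\bigl[|\tilde L u|^2 - |D^2u|^2 - (\Ric+\alpha D^2h)(\grad u,\grad u)\bigr]{\rm e}^{-\alpha h}\dvg = -\int_{\partial M}\!\bigl[\partial_{\n} u\cdot\tilde L u + D^2u(\n,\grad u)\bigr]{\rm e}^{-\alpha h}\dvg.
\]

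The last step is to recognise the boundary integrand in the form asserted in \eqref{reilly}. Using the standard decompositions on $\partial M$,
\[
\lap u = \lap_\partial u - H\partial_{\n} u - D^2u(\n,\n),\qquad D^2u(\grad u,\n) = g(\grad_\partial\partial_{\n} u,\grad_\partial u) - I(\grad_\partial u,\grad_\partial u) + \partial_{\n} u\cdot D^2u(\n,\n),
\]
(the second coming from the splitting $\grad u = \grad_\partial u + \partial_{\n} u\cdot\n$ together with the tangential identity $D^2u(X,\n) = X(\partial_{\n} u) - I(X,\grad_\partial u)$ for $X$ tangent to $\partial M$), I substitute into $\partial_{\n} u\cdot\tilde L u + D^2u(\n,\grad u)$; the two occurrences of $\partial_{\n} u\cdot D^2u(\n,\n)$ cancel, and the remaining terms regroup exactly into the three pieces appearing on the right-hand side of~\eqref{reilly}. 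Converting back from ${\rm e}^{-\alpha h}\dvg = \exhal\,\dm$ to $\dm$ then gives the formula as stated.

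The main obstacle I anticipate is not conceptual but purely one of sign and index bookkeeping: one has to stay consistent with the convention $\lap = -\divg\grad$ (and the corresponding sign of $\lap_\partial$), and check carefully that the normal and tangential components of $D^2 u$ recombine exactly as claimed at the boundary. Beyond that, the weighted Bochner identity and the two integrations by parts are entirely routine.
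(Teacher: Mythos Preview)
Your proof is correct and proceeds by the same three ingredients as the paper: a weighted Bochner identity, two integrations by parts, and the standard boundary decomposition of the Hessian and Laplacian. The signs and boundary cancellations (in particular the elimination of the $\partial_{\n} u\cdot D^2u(\n,\n)$ terms) check out.

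Where you differ from the paper is in the packaging. The paper works directly with $L_h$ and the measure $\dm={\rm e}^{-h}\dvg$: it first proves a Bochner formula for $L_h$ itself (Lemma preceding the proof), which carries the factor $\exhal$ and an extra drift term $(\alpha-1)g(\grad h,L_hu)$, and then integrates this against $\dm$, doing the integrations by parts with $\exh$ and $\exhal$ tracked separately. You instead observe at the outset that $L_h={\rm e}^{-h(\alpha-1)}\tilde L$ with $\tilde L=\lap+\alpha g(\grad h,\grad\cdot)$, and that every occurrence of $\exhal\,\dm$ collapses to the single weight ${\rm e}^{-\alpha h}\,\dvg$; this reduces the whole statement to the classical Reilly formula for the Bakry--Emery Laplacian $\tilde L$, which is self-adjoint with respect to that weight. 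The upshot is that your Bochner identity is the clean one for $\tilde L$ (no stray exponential factors, no $(\alpha-1)$ correction), and the two integrations by parts are the standard drift-Laplacian ones. The paper's route has the advantage of making the dependence on $L_h$ and $\dm$ explicit at every step, but yours is shorter and makes clear that the result is really just the known weighted Reilly formula applied with the density $\alpha h$ in place of $h$.
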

In the next section, we prove these two theorems.

\section{Proofs}
\subsection{Proof of Theorem \ref{thmreilly}}

To prove Theorem \ref{thmreilly}, one needs the following adapted Bochner formula deduced from the standard one for smooth functions (see, e.g., \cite[Theorem~346]{berger2007panoramic} and~\cite{Bochner1946}):
\begin{equation*}
\frac{1}{2} \lap \big(|\nabla u|^2\big)=-|D^2u|^2 +g(\grad u,\grad\lap u)-\Ric(\grad u,\grad u).
\end{equation*}

\begin{lemma}Let $u$ be a smooth function on $(M,g)$. Then,
\begin{gather}
\frac{1}{2} L_h |\nabla u|^2=-\exhal \big(|D^2u|^2+\big(\Ric+\alpha D^2h\big)(\grad u,\grad u)\big)\nonumber\\
\hphantom{\frac{1}{2} L_h |\nabla u|^2=}{} +g\big(\grad u,\grad L_h u+(\alpha-1)g(\grad h, L_h u)\big). \label{bochner}
\end{gather}
\end{lemma}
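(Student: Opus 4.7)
Write $P\cdot := \lap\cdot + \alpha g(\grad h, \grad\cdot)$, so that $L_h = \exhal P$. The strategy is to first establish the unweighted analogue of the identity for $P$ and only at the end multiply through by $\exhal$ and convert $Pu$ back to $L_h u$; this cleanly separates the Bochner computation from the bookkeeping of the conformal weight.

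Adding $\tfrac{\alpha}{2} g(\grad h, \grad|\grad u|^2)$ to the standard Bochner formula recalled above, and using the elementary identity $\tfrac{1}{2} g(X, \grad|\grad u|^2) = D^2 u(X, \grad u)$ (a one-line consequence of $X(g(\grad u,\grad u)) = 2 g(\nabla_X \grad u, \grad u)$) with $X = \grad h$, yields
\[
\tfrac{1}{2} P|\grad u|^2 = -|D^2 u|^2 - \Ric(\grad u,\grad u) + g(\grad u,\grad\lap u) + \alpha D^2 u(\grad u, \grad h).
\]
To recognise the last two terms as $g(\grad u, \grad Pu)$ up to a curvature correction, I apply the Leibniz rule $\nabla_X g(\grad h, \grad u) = D^2 h(X, \grad u) + D^2 u(X, \grad h)$, pair with $X = \grad u$, and rearrange to obtain $g(\grad u, \grad \lap u) + \alpha D^2 u(\grad u, \grad h) = g(\grad u, \grad Pu) - \alpha D^2 h(\grad u, \grad u)$. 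Substituting this back gives the weight-free form
\[
\tfrac{1}{2} P|\grad u|^2 = -|D^2 u|^2 - (\Ric + \alpha D^2 h)(\grad u, \grad u) + g(\grad u, \grad Pu).
\]

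Finally, multiplying by $\exhal$ turns the left-hand side into $\tfrac{1}{2} L_h|\grad u|^2$ and supplies the factor $\exhal$ on the two tensorial terms. For the remaining term, I substitute $Pu = \mexhal L_h u$ and expand via the product rule $\grad(\mexhal L_h u) = \mexhal\bigl[\grad L_h u + (\alpha-1)(L_h u)\grad h\bigr]$; the prefactor $\exhal$ cancels $\mexhal$ and produces the announced correction $(\alpha-1)(L_h u)\grad h$ inside the $g(\grad u, \cdot)$ pairing, matching \eqref{bochner}. I do not foresee any real obstacle: the only points requiring attention are the $(\alpha-1)$ factor emerging when differentiating $\mexhal$ and the consistent use of the sign convention $\lap = -\divg\grad$ throughout; the rest is linear bookkeeping from the standard Bochner identity.
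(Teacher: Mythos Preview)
Your proposal is correct and uses the same ingredients as the paper (standard Bochner, the Hessian identity for $\tfrac12 g(\grad h,\grad|\grad u|^2)$, and the product rule on the weight $\mexhal$), but you organise them differently. The paper works with $L_h$ throughout: it expands $\tfrac12 L_h|\grad u|^2$ directly, uses the identity $\tfrac12 g(\grad h,\grad|\grad u|^2)=g(\grad(g(\grad h,\grad u)),\grad u)-D^2h(\grad u,\grad u)$ in one step, and then separately computes $g(\grad(L_hu),\grad u)$ by differentiating $L_hu=\exhal(\lap u+\alpha g(\grad h,\grad u))$ and matches terms. You instead isolate the drift operator $P=\lap+\alpha g(\grad h,\grad\cdot)$, first prove the clean (and well-known) Bochner identity $\tfrac12 P|\grad u|^2=-|D^2u|^2-(\Ric+\alpha D^2h)(\grad u,\grad u)+g(\grad u,\grad Pu)$, and only then multiply by $\exhal$ and substitute $Pu=\mexhal L_hu$. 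Your packaging is arguably tidier since the conformal weight never interferes with the curvature computation, while the paper's version avoids introducing an auxiliary operator; the actual calculus is identical.
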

\begin{proof}
\begin{gather*}
\frac{1}{2} L_h |\nabla u|^2 =\frac{1}{2}\exhal \big( \lap |\nabla u|^2 +\alpha g\big(\nabla h, \nabla |\nabla u|^2\big)\big)\\
\hphantom{\frac{1}{2} L_h |\nabla u|^2}{} =\exhal \big({-}\big|D^2 u\big|^2+g(\grad u,\grad\lap u)-\Ric(\grad u,\grad u)\big)\\
\hphantom{\frac{1}{2} L_h |\nabla u|^2=}{} +\frac{1}{2}\alpha\exhal g\big(\nabla h, \nabla |\nabla u|^2\big)\\
\hphantom{\frac{1}{2} L_h |\nabla u|^2}{} =-\exhal \big(\big|D^2 u\big|^2+\Ric(\grad u,\grad u)\big) +\exhal g(\grad u,\grad\lap u)\\
\hphantom{\frac{1}{2} L_h |\nabla u|^2=}{} -\alpha\exhal D^2h(\grad u,\grad u)+\alpha\exhal g ( \grad (g(\grad h,\grad u) ),\grad u ).
\end{gather*}
For the last line, we have used
\begin{gather*}
\frac{1}{2}g\big(\nabla h, \nabla |\nabla u|^2\big)=g(\nabla h, \grad_{\grad u}\grad u)=
D_{\grad u}g(\grad h,\grad u) -g(\grad_{\grad u}\grad h,\grad u)\\
\hphantom{\frac{1}{2}g\big(\nabla h, \nabla |\nabla u|^2\big)}{}
=g ( \grad (g(\grad h,\grad u) ),\grad u )- D^2h(\grad u,\grad u).
\end{gather*}
Moreover,
\begin{gather*}
g (\grad( L_h u),\grad u )=-(\alpha-1)g ( g(\grad h, L_h u), \grad u )\\
\hphantom{g (\grad( L_h u),\grad u )=}{} +\exhal g ( \grad\lap u, \grad u )+\alpha\exhal g (\grad (g(\grad h,\grad u) ), \grad u ).
\end{gather*}
Finally,
\begin{gather*}
\frac{1}{2} L_h |\nabla u|^2=-\exhal \big(\big|D^2u\big|^2+\big(\Ric+\alpha D^2h\big)(\grad u,\grad u)\big) \\
\hphantom{\frac{1}{2} L_h |\nabla u|^2=}{}
 +g\big(\grad u,\grad L_h u+(\alpha-1)g(\grad h, L_h u)\big).\tag*{\qed}
\end{gather*}\renewcommand{\qed}{}
\end{proof}

\begin{proof}[Proof of Theorem \ref{thmreilly}]
We shall integrate equality \eqref{bochner}. On the left-hand side, we have
\begin{gather*}
 \frac{1}{2}\int_M L_h|\grad u|^2 \,\dm =\frac{1}{2} \int_M \exhal \big(\lap|\grad u|^2+\alpha g \big(\grad h,\grad|\grad u|^2\big)\big)\,\dm\\
\hphantom{\frac{1}{2}\int_M L_h|\grad u|^2 \,\dm}{} =\frac{1}{2}\int_M g\big(\grad\big(|\grad u|^2\big),\grad\big({\rm e}^{-\alpha h}\big)\big)\,\dvg-\frac{1}{2}\int_{\partial M}\partial_{\n}\big(|\grad u|^2\big){\rm e}^{-\alpha h}\,\dvg \\
\hphantom{\frac{1}{2}\int_M L_h|\grad u|^2 \,\dm=}{} + \frac{1}{2} \alpha\int_M \exhal g\big(\grad h,\grad|\grad u|^2 \big)\,\dm \\
\hphantom{\frac{1}{2}\int_M L_h|\grad u|^2 \,\dm}{}
 =-\int_{\partial M}\exhal g (\partial_{\n}(\grad u),\grad u )\,\dm.
\end{gather*}
The second term on the right-hand side gives
\begin{gather*}
 \int_M g\big(\grad u,\grad L_h u+(\alpha-1) g(\grad h, L_hu) \big)\,\dm
=\int_M g \big( \grad u,\exh\grad L_h u \big) \,\dvg\\
\qquad \quad{} + (\alpha-1)\int_M g ( \grad u,g(\grad h, L_hu) )\exh\, \dvg \\
\qquad{} =\int_M g \big( \grad u,\grad \big( L_h u\exh\big) \big) \,\dvg+\int_M g (\grad u,g(\grad h, L_hu) )\,\dm\\
 \qquad\quad{}+(\alpha-1)\int_M g (\grad u,g(\grad h, L_hu) )\,\dm \\
 \qquad{} =\int_M g (\lap u, L_hu )\, \dm+\alpha\int_M g ( L_h u, g (\grad h,\grad u ) )\,\dm
 +\int_{\partial M} g (\partial_{\n} u, L_hu )\,\dm\\
\qquad{} =\int_M \mexhal| L_h u|^2 \dm+\int_{\partial M} g (\partial_{\n} u, L_h u ) \,\dm.
\end{gather*}
Then, replacing in \eqref{bochner}, one has
\begin{gather}
 -\int_{\partial M} \exhal g\big(\partial_{\n} (\grad u), \grad u\big)\,\dm\nonumber\\
\qquad{} =-\int_M\exhal \big|D^2u\big|^2\, \dm -\int_M\exhal \big(\Ric+\alpha D^2h\big) (\grad u,\grad u )\,\dm\nonumber\\
\qquad\quad{}
+\int_M\mexhal| L_hu|^2\,\dm+\int_{\partial M}g (\partial_{\n} u, L_h u )\, \dm,\nonumber\\
 \int_M\mexhal| L_hu|^2-\exhal \big|D^2u\big|^2\,\dm
=\int_M\exhal \big(\Ric+\alpha D^2h\big) (\grad u,\grad u )\,\dm\nonumber\\
\qquad\quad{} -\int_{\partial M}g(\partial_{\n}(\grad u),\grad u)\exhal +g (\partial_{\n} u, L_hu ) \,\dm.\label{eq03082020}
\end{gather}
Now, it remains to estimate $ [ g(\partial_{\n}(\grad u),\grad u)\exhal +g(\partial_{\n} u, L_hu) ] $ which is equal to
\begin{gather*}
\exhal [g(\partial_{\n}(\grad u),\grad u)+g(\partial_{\n} u,\lap u)+\alpha g(\partial_{\n} u,g(\grad h,\grad u) ].
\end{gather*}
 We notice that
\begin{gather}\label{eq19062020}
 \lap u=- H\partial_{\n} u+\lap_\partial u-\partial_{\n}^2u,
\end{gather}
 (see, e.g., \cite[equation~(3)]{casey}). We recall that our sign convention for the operators $\lap$ and $\lap_\partial$ is the opposite of that in \cite{casey}. Moreover,
 \begin{gather}
 g(\partial_{\n}(\grad u),\grad u)=(-\lap u+\lap_{\partial}u-H\partial_{\n} u)\partial_{\n} u -I(\grad_\partial u,\grad_\partial u)+g(\grad_\partial u,\grad_\partial \partial_{\n} u)\nonumber\\
 \hphantom{g(\partial_{\n}(\grad u),\grad u)}{} =g\big(\partial_{\n} u,\partial_{\n}^2 u\big)-I(\grad_\partial u,\grad_\partial u)+g(\grad_\partial u,\grad_\partial \partial_{\n} u)\label{eq03082020a}
 \end{gather}
(see \cite[p.~4]{casey}).

We then combine equalities \eqref{eq19062020} and \eqref{eq03082020a} to derive an expression for the last term in the right-hand side of~\eqref{eq03082020}:
\begin{gather*}
g (\partial_{\n}(\grad u),\grad u )\exhal +g(\partial_{\n} u , L_hu)
=\exhal [-I(\grad_\partial u, \grad_\partial u)+g(\grad_\partial u,\grad_\partial\partial_{\n} u) ]\\
\qquad{} +\exhal g (\partial_{\n} u ,-H\partial_{\n} u+\alpha g(\grad h,\grad u)+\lap_\partial u ).
\end{gather*}
Hence,
\begin{gather*}
\int_M\mexhal| L_h u|^2-\exhal \big|D^2u\big|^2 \dm
=\int_M\exhal \big(\Ric+\alpha D^2h\big) (\grad u,\grad u )\,\dm \\
\qquad{} +\int_{\partial M}\exhal g ( \partial_{\n} u,H\partial_{\n} u-\alpha g (\grad h,\grad u t)-\lap_\partial u )\,\dm\\
\qquad{} + \int_{\partial M}\exhal [I(\grad_\partial,\grad_\partial u)-g(\grad_\partial u,\grad_\partial\partial_{\n} u) ]\,\dm.\tag*{\qed}
\end{gather*}\renewcommand{\qed}{}
\end{proof}

\subsection{Proof of Theorem \ref{main1}}
Let $(M, g)$ be a compact connected $n$-dimensional Riemannian manifold with a convex boundary~$\partial M$. Let $h\in C^2(M)$ and assume that $\lambda$ is the first non-zero eigenvalue of $L_h$. Let $u\neq 0$ be an eigenfunction with corresponding eigenvalue $\lambda$, i.e., $u$ satisfies $L_h u=\lambda u$.

\begin{lemma}\label{lm07082020}
 If $\Ric +\alpha D^2h\geqslant \alpha^2\frac{|\grad h|^2}{nz}+A$, for some $A > 0$ and $z > 0$ then
\begin{gather}\label{eq3}
A\lambda\int_M u^2 \,\dm \leqslant \frac{\lambda^2}{n(z+1)}\int_M u^2 \big(\mexhal n(z+1)-\exhal \big)\,\dm.
\end{gather}
\end{lemma}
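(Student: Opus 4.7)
The plan is to apply the Reilly formula of Theorem~\ref{thmreilly} to the eigenfunction $u$ (with $L_h u = \lambda u$), exploit the boundary hypotheses to kill or sign-control the boundary integrals, and then carry out a parameterized Cauchy--Schwarz precisely calibrated against the Ricci--Hessian hypothesis. Inserting $u$ into \eqref{reilly}, the Neumann condition $\partial_{\n} u = 0$ on $\partial M$ wipes out the first boundary integral entirely and also the sub-term $g(\grad_\partial u, \grad_\partial \partial_{\n} u)$ in the second; what remains is $\int_{\partial M} \exhal I(\grad_\partial u, \grad_\partial u)\,\dm$, which is non-negative because $\partial M$ is convex (i.e., $I\geq 0$). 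Moving this non-negative contribution to the other side turns Reilly's identity into an inequality, and using $|L_h u|^2 = \lambda^2 u^2$ gives
\[
\lambda^2 \int_M u^2 \mexhal\,\dm \;-\; \int_M \exhal |D^2 u|^2\,\dm \;\geq\; \int_M \exhal \bigl(\Ric + \alpha D^2 h\bigr)(\grad u, \grad u)\,\dm.
\]

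Next I would estimate $|D^2 u|^2$ from below. Combine the pointwise trace inequality $|D^2 u|^2 \geq (\lap u)^2/n$ with the elementary parameterized Cauchy--Schwarz $(x-y)^2 \geq x^2/(z+1) - y^2/z$ (valid for any $z>0$, and verified by expanding $(zx-(z+1)y)^2\geq 0$). Apply these to $x = \lambda u\,\mexhal$ and $y = \alpha g(\grad h, \grad u)$, after rearranging the eigenvalue equation $L_h u = \lambda u$ into $\lap u = \lambda u\,\mexhal - \alpha g(\grad h, \grad u)$, and bound $g(\grad h,\grad u)^2 \leq |\grad h|^2|\grad u|^2$. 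This produces the pointwise estimate
\[
|D^2 u|^2 \;\geq\; \frac{\lambda^2 u^2\, {\rm e}^{2h(\alpha-1)}}{n(z+1)} \;-\; \frac{\alpha^2 |\grad h|^2 |\grad u|^2}{nz}.
\]

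Now substitute this lower bound into the Reilly inequality above, and on the right-hand side invoke the hypothesis $\Ric + \alpha D^2 h \geq \alpha^2|\grad h|^2/(nz) + A$. This produces two copies of $\frac{\alpha^2}{nz}\int_M \exhal |\grad h|^2|\grad u|^2\,\dm$ with opposite signs; they cancel exactly, and the surviving contribution from the Ricci side is simply $A\int_M \exhal |\grad u|^2\,\dm$. Finally, the variational identity $\int_M \exhal |\grad u|^2\,\dm = \lambda \int_M u^2\,\dm$---obtained by pairing $L_h u = \lambda u$ with $u$ and integrating by parts, the boundary term vanishing by Neumann---converts this into $A\lambda \int_M u^2\,\dm$. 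Gathering terms and grouping the $\lambda^2$ contributions over the common denominator $n(z+1)$ then yields the stated inequality.

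The only delicate point is the simultaneous calibration: the parameter $z$ in the Cauchy--Schwarz and the coefficient $\alpha^2/(nz)$ in the hypothesis must be prescribed in matched form, otherwise the $|\grad h|^2|\grad u|^2$ terms refuse to cancel and one is left with an unusable residual. This matching is exactly the Bakry--Emery curvature--dimension condition with effective dimension $N = n(z+1)$ in disguise, which explains both the shape of the hypothesis and the appearance of $n(z+1)$ in the denominator of the conclusion. Beyond this calibration, the derivation is routine bookkeeping.
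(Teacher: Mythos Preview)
Your approach is identical to the paper's: Reilly formula under Neumann conditions, convexity to drop the boundary term, the trace inequality $|D^2u|^2\geqslant(\lap u)^2/n$, a parametrised Young/Cauchy--Schwarz inequality with parameter $z$, cancellation of the $|\grad h|^2|\grad u|^2$ terms against the curvature hypothesis, and the energy identity $\int_M\exhal|\grad u|^2\,\dm=\lambda\int_M u^2\,\dm$.

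There is, however, one discrepancy worth flagging. You correctly rearrange the eigenvalue equation to $\lap u=\lambda u\,\mexhal-\alpha g(\grad h,\grad u)$ and then apply the quadratic inequality with $x=\lambda u\,\mexhal$. Carrying this through, the Hessian term contributes
\[
\int_M\exhal|D^2u|^2\,\dm\;\geqslant\;\frac{\lambda^2}{n(z+1)}\int_M u^2\,\mexhal\,\dm-\frac{\alpha^2}{nz}\int_M\exhal|\grad h|^2|\grad u|^2\,\dm,
\]
so after cancellation and the energy identity you actually obtain
\[
A\lambda\int_M u^2\,\dm\;\leqslant\;\frac{\lambda^2}{n(z+1)}\int_M u^2\,\mexhal\bigl(n(z+1)-1\bigr)\,\dm,
\]
i.e.\ the bracket is $\mexhal n(z+1)-\mexhal$, \emph{not} $\mexhal n(z+1)-\exhal$ as in the stated lemma. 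Your closing sentence ``gathering terms \dots\ yields the stated inequality'' is therefore not literally true. The paper's own proof arrives at the stated form because it writes $|\lap u|^2=(\lambda u-\alpha g(\grad h,\grad u))^2$, omitting the factor $\mexhal$ in front of $\lambda u$; this appears to be a slip. Your computation is the correct one, and it produces a variant of~\eqref{eq3} that differs in the second term of the bracket; for the downstream application (Lemma~\ref{lem20062020} and Theorem~\ref{main1}) one would need to redo estimate~(ii) with the corrected bracket, which is straightforward.
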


\begin{proof}
 With the Neumann boundary condition, \eqref{reilly} becomes
\begin{gather}
\int_M\mexhal| L_h u|^2-\exhal \big|D^2u\big|^2 \,\dm\nonumber\\
\qquad{} =\int_M\exhal \big( \Ric+\alpha D^2h\big) (\grad u,\grad u )\,\dm
+\int_{\partial M}\exhal I (\grad_{\partial }u,\grad_{\partial }u )\, \dm.
\label{reilly neum}
\end{gather}
Since $\partial M$ is convex, then $I(\grad_{\partial }u,\grad_{\partial }u) \geqslant 0$ and \eqref{reilly neum} becomes
\begin{gather}
\int_M\mexhal| L_h u|^2-\exhal \big|D^2u\big|^2 \geqslant \int_M\exhal \big( \Ric+\alpha D^2h\big) (\grad u,\grad u )\,\dm \label{eq2} \\
\hphantom{\int_M\mexhal| L_h u|^2-\exhal \big|D^2u\big|^2}{} \geqslant  \alpha^2\int_M\exhal |\grad u|^2\frac{|\grad h|^2}{nz}\,\dm +A\lambda\int_M u^2 \,\dm.\nonumber
\end{gather}
Notice that the same inequality also holds if $\partial M$ is empty.

On the other hand, $\big|D^2u\big|^2\geqslant\frac{|\lap u|^2}{n} $ (see \cite[p.~409]{berger2007panoramic}), and
\begin{gather}
\int_M  \mexhal | L_h u|^2-\exhal |D^2u|^2\,\dm
\leqslant \int_M\mexhal\lambda^2 u^2-\frac{1}{n}\exhal |\lap u|^2 ~\dm\nonumber\\
\qquad{} =\int_M\mexhal\lambda^2 u^2-\frac{1}{n}\exhal (\lambda u-\alpha g(\grad h,\grad u) )^2\,\dm\nonumber\\
\qquad{} \leqslant\int_{M}\mexhal \lambda^2u^2-\frac{1}{n}\exhal \left( \frac{\lambda^2u^2}{z+1}-\alpha^2\frac{|g(\grad h,\grad u)|^2}{z}\right)\,\dm
\nonumber \\
\label{eq2prime}
\qquad{} = \lambda^2\int_{M}{u^2\frac{\mexhal n(z+1) -\exhal }{n(z+1)}~\dm} +\alpha^2\int_M \exhal \frac{|g(\grad h,\grad u)|^2}{nz} \,\dm.
\end{gather}
In the second to last inequality we have used Young's inequality. Indeed, given any $\epsilon>0$,
\begin{equation*}
\lambda u \alpha g(\grad h,\grad u )\leqslant\frac{\lambda^2u^2}{2\epsilon}+\frac{\epsilon}{2}\alpha^2|g(\grad h,\grad u)|^2,
\end{equation*}
 since $\big(\frac{\lambda u}{\sqrt{2\epsilon}}-\sqrt{\frac{\epsilon}{2}}\alpha g(\grad h,\grad u)\big)^2$ is non-negative. Adding the expression
\[ -\frac{1}{2}\big( \lambda^2u^2+\alpha^2|g(\grad h,\grad u)|^2\big)\] to both sides of this inequality, we get
\begin{equation*}
-\big(\lambda u-\alpha g(\grad h,\grad u)\big)^2\leqslant -\left[\lambda^2u^2\left(1-\frac{1}{\epsilon}\right)+\alpha^2|g(\grad h,\grad u)|^2(1-\epsilon)\right].
\end{equation*}
Then choosing $\epsilon:=\frac{z+1}{z}$, one has
\begin{equation*}
- \big(\lambda u-\alpha g(\grad h,\grad u) \big)^2\leqslant -\left[\frac{\lambda^2u^2}{z+1}-\frac{\alpha^2|g(\grad h,\grad u)|^2}{z}\right].
\end{equation*}
Now, combining \eqref{eq2} and \eqref{eq2prime}, we have
\begin{equation*}
A\lambda\int_M u^2 \,\dm \leqslant \lambda^2\int_{M}u^2\frac{\mexhal n(z+1) -\exhal }{n(z+1)} \,\dm.\tag*{\qed}
\end{equation*}\renewcommand{\qed}{}
\end{proof}

Now, we consider $\tilde{\lambda}:= \lambda\big(\frac{|M|}{\int_M {\rm e}^{-h}\dvg} \big)^{\alpha-1}$ which is invariant under rescaling.
Indeed, for any non-zero scalar $a$,
\begin{gather*}
\frac{\int_M|\grad u|^2\big(a{\rm e}^{-h}\big)^\alpha \,\dvg}{\int_M u^2\big(a{\rm e}^{-h}\big)\,\dvg}\cdot\left( \frac{|M|}{\int_M (a{\rm e}^{-h})\,\dvg}\right)^{{\alpha-1}}
=\frac{\int_M|\grad u|^2{\rm e}^{-h\alpha}\, \dvg}{\int_M u^2{\rm e}^{-h}\,\dvg} \left( \frac{|M|}{\int_M {\rm e}^{-h}\,\dvg}\right)^{{\alpha-1}}.
\end{gather*}
Replacing $\lambda$ by $\tilde{\lambda}\big(\frac{\int_M {\rm e}^{-h}\dvg} {|M|}\big)^{\alpha-1} $ in~\eqref{eq3}, under the assumptions of Lemma~\ref{lm07082020}, we get the following inequality:
\begin{equation}\label{eq20062020}
A\tilde{\lambda}\int_M u^2 \,\dm \leqslant\tilde{\lambda}^2\int_M u^2 \left( \frac{\int_M {\rm e}^{-h}\,\dvg} {|M|}\right)^{\alpha-1} \left(\frac{\mexhal n(z+1)-\exhal }{n(z+1)}\right) \dm.
\end{equation}

Let $j\geqslant 2$, $z\in \R_{>0}$, $\alpha>1$ and $|~|\colon M\ni x\longrightarrow d(x_0,x)\in \R_{\geqslant 0}$ where $x_0\in M$ is a fixed point. We define
\begin{gather*} c_0:=\sqrt{n(z+1){\rm e}^{\alpha-1}\left({\rm e}^{\alpha-1}-\frac{1}{j} \right)},\qquad C_j:=-\frac{1}{\alpha-1}\log(c_0), \qquad h_j(x):={\rm e}^{-\frac{|x|^2}{j}}+C_j.
\end{gather*}
 The following properties hold.
\begin{lemma}\label{lem20062020}
The function $h_j$ satisfies:
\begin{enumerate}\itemsep=0pt
\item[$(i)$] $\big( \frac{\int_M {\rm e}^{-h_j}\,\dvg} {|M|}\big)^{\alpha-1}\leqslant c_0$,%\label{i}
\item[$(ii)$] $\frac{{\rm e}^{h_j(\alpha-1)} n(z+1)-{\rm e}^{-h_j(\alpha-1)}}{n(z+1)}\leqslant\frac{1}{jc_0}$,%\label{ii}
\item[$(iii)$] $|\grad h_j|^2 -\alpha D^2h_j\leqslant \frac{2\alpha}{j} $.%\label{iii}
\end{enumerate}
\end{lemma}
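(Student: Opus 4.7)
The plan is to verify the three assertions separately, with (i) and (ii) reducing to direct manipulations and (iii) forming the technical heart of the lemma.

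\textbf{For (i)}, I use $e^{-|x|^2/j}\geq 0$ to see that $h_j\geq C_j$, hence $e^{-h_j}\leq e^{-C_j}=c_0^{\,1/(\alpha-1)}$. Integrating over $M$, dividing by $|M|$, and raising both sides to the power $\alpha-1>0$ yields (i) at once.

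\textbf{For (ii)}, I introduce the auxiliary function $\phi(s):=e^{s}-e^{-s}/[n(z+1)]$, so that the left-hand side of (ii) is $\phi\bigl((\alpha-1)h_j\bigr)$. Since $\phi$ is strictly increasing and $h_j\leq 1+C_j$ (because $e^{-|x|^2/j}\leq 1$), it suffices to check the inequality at $s_{\max}:=(\alpha-1)(1+C_j)=\alpha-1-\log c_0$. There one has $e^{s_{\max}}=e^{\alpha-1}/c_0$ and $e^{-s_{\max}}=c_0\,e^{-(\alpha-1)}$, hence
\[ \phi(s_{\max})=\frac{1}{c_0}\Big[e^{\alpha-1}-\frac{c_0^{2}\,e^{-(\alpha-1)}}{n(z+1)}\Big]. \]
Substituting the defining relation $c_0^{2}=n(z+1)e^{\alpha-1}(e^{\alpha-1}-1/j)$ collapses the bracket to $1/j$, giving precisely the upper bound $1/(jc_0)$.

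\textbf{For (iii)}, set $r(\cdot)=d(x_0,\cdot)$ and work away from the cut locus of $x_0$, where $r$ is smooth with $|\grad r|=1$ and $D^2(r^2)=2\,dr\otimes dr+2r\,D^2 r$. A direct chain-rule computation gives
\[ |\grad h_j|^2=\frac{4r^2}{j^2}e^{-2r^2/j}, \qquad D^2 h_j=e^{-r^2/j}\Big[\Big(\frac{4r^2}{j^2}-\frac{2}{j}\Big)dr\otimes dr-\frac{2r}{j}\,D^2 r\Big]. \]
Substituting these into $|\grad h_j|^2-\alpha D^2 h_j$ and decomposing a tangent vector $X=a\grad r+T$ with $T\perp\grad r$ (using the classical identity $D^2 r(\grad r,\cdot)=0$) reduces (iii) to a pointwise inequality in the single variable $u=r^2/j$, supplemented by a bound on the transverse piece $r\,D^2 r$ restricted to $(\grad r)^{\perp}$. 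The Bonnet--Myers bound $r\leq\pi\sqrt{(n-1)/\kappa}$ coming from $\Ric\geq\kappa>0$ keeps $u$ in a compact range independent of $j$, and the radial contribution is absorbed by elementary estimates of the form $u e^{-u}\leq e^{-1}$ and $e^{-u}\leq 1$.

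The main obstacle is handling the transverse part of $D^2 r$: a Ricci lower bound controls only its trace, not individual directional values, so the crude tensorial bookkeeping will not close on its own. I expect the ``long and painful'' step the author warns about to reside here, either handled through a direct comparison-geometric estimate available in this setting, or by carrying the transverse $D^2 r$-terms through the subsequent Reilly-formula argument of Lemma~\ref{lm07082020} so that only their trace actually enters the final eigenvalue estimate.
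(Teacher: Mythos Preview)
Your arguments for (i) and (ii) are correct and essentially coincide with the paper's; your treatment of (ii) via the monotone auxiliary function $\phi(s)=e^{s}-e^{-s}/[n(z+1)]$ is in fact tidier than the paper's, which rewrites the inequality as a quadratic in $u=e^{(\alpha-1)h_j}$, factors it as $(u-u_1)(u-u_2)$, and then checks $0<u\leqslant u_2$ by unwinding the definition of $c_0$.

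For (iii), you are anticipating far more than the paper actually does. The paper's entire proof of (iii) is the one--variable computation
\[
(h_j'(r))^2-\alpha\,h_j''(r)
=e^{-r^2/j}\Bigl[\tfrac{2\alpha}{j}+\tfrac{4r^2}{j^2}\bigl(e^{-r^2/j}-\alpha\bigr)\Bigr]
\leqslant \tfrac{2\alpha}{j},
\]
obtained by differentiating $r\mapsto e^{-r^2/j}$ as a real function and using $e^{-r^2/j}\leqslant 1<\alpha$. No Hessian comparison, no decomposition $X=a\grad r+T$, no treatment of the transverse piece $f'(r)\,D^2 r$, and no appeal to Bonnet--Myers appears. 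The ``long and painful computation'' remark in the introduction refers to the whole route through the extended Reilly formula and Lemma~\ref{lm07082020}, not to this step.

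Your geometric objection is mathematically legitimate: on a genuinely curved manifold the Hessian of a radial function acquires the term $f'(r)\,D^2 r$, and a Ricci lower bound controls only $\Delta r$, not $D^2 r$ directionally, so the tensorial inequality $|\grad h_j|^2\,g-\alpha D^2 h_j\leqslant \tfrac{2\alpha}{j}\,g$ is not established by the one--variable calculation alone (nor is smoothness of $r$ at the cut locus discussed). But this is a looseness in the paper rather than a missing ingredient in your proposal; the paper simply does not engage with the issue you raise.
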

\begin{proof} (i)
$h_j(x)>C_j$ implies that
\[
\int_M {\rm e}^{-h_j(x)}\,\dvg\leqslant\int_Mc_0^{\frac{1}{\alpha-1}}\,\dvg=c_0^{\frac{1}{\alpha-1}}|M|.
\]

(ii) Let us set $b:=n(z+1)$ and $u:={\rm e}^{h_j(\alpha-1)}$.
 We want to prove that
$\frac{(u^2b-1)jc_0-bu}{ubjc_0}\leqslant 0$.
Notice that $u>0$, $bjc_0>0$
 and $\frac{(u^2b-1)jc_0-bu}{u}=\frac{(u-u_1)(u-u_2)}{u}$, where
\[ u_1:=\frac{b-\sqrt{b^2+4bj^2c_0^2}}{2bj c_0}<0 \qquad \text{and} \qquad u_2:=\frac{b+\sqrt{b^2+4bj^2c_0^2}}{2b jc_0}>0.
\]
Moreover,
$0<{\rm e}^{h_j(\alpha-1)} \leqslant u_2 $. Indeed,
${\rm e}^{h_j(\alpha-1)}={\rm e}^{(\alpha-1)\big({\rm e}^{-\frac{|x|^2}{j}}+C_j\big)}$, so the first inequality is immediate. For the second inequality, we have
\[
\sqrt{\frac{4}{b}+\frac{1}{j^2c_0^2}}=
 \frac{1}{c_0} \left( \sqrt{\frac{4c_0^2}{b}+\frac{1}{j^2}}\right)=
 \frac{1}{c_0} \sqrt{\left( 2{\rm e}^{\alpha-1}-\frac{1}{j}\right)^2}= \frac{1}{c_0} \left( 2{\rm e}^{\alpha-1}-\frac{1}{j}\right).
\]
Hence,
\[ \log\left(\frac{1}{c_0}\right)=\log\left[ \frac{1}{2}\left( \sqrt{\frac{4}{b}+\frac{1}{j^2c_0^2}}+\frac{1}{j c_0}\right)\right]-(\alpha-1)\] and
\begin{gather*}
 C_j=\frac{1}{(\alpha-1)} \log\left[ \frac{1}{2}\left( \sqrt{\frac{4}{b}+\frac{1}{j^2c_0^2}}+\frac{1}{j c_0}\right)\right]-1,\\
 h_j(x)\leqslant 1+C_j\leqslant\frac{1}{\alpha-1}\log\left[ \frac{1}{2}\left( \sqrt{\frac{4}{b}+\frac{1}{j^2c_0^2}}+\frac{1}{j c_0}\right)\right].
\end{gather*}
Hence,
\[ {\rm e}^{h_j(\alpha-1)}\leqslant \frac{1}{2}\left( \sqrt{\frac{4}{b}+\frac{1}{j^2c_0^2}}+\frac{1}{j c_0}\right)=u_2.\]

(iii) Setting $r(x)=|x|$, $r$ is radial and we have
\[
h_j(r)={\rm e}^{-\frac{r^2}{j}} , \qquad \grad h_j(r)={\rm e}^{-\frac{r^2}{j}}\left(-\frac{2}{j}\right)r, \qquad D^2h_j(r)=\left(-\frac{2}{j}\right){\rm e}^{-\frac{r^2}{j}}\left(1-\frac{2}{j}\right)r.
\] Hence,
\begin{gather*}
|\grad h_j(r)|^2 -\alpha D_j^2h_j(r) ={\rm e}^{-\frac{r^2}{j}} \left( -\frac{4}{j^2}\alpha r^2 +\frac{4}{j^2}r^2{\rm e}^{-\frac{r^2}{j}}+\frac{2\alpha}{j} \right) \\
\hphantom{|\grad h_j(r)|^2 -\alpha D_j^2h_j(r)}{} ={\rm e}^{-\frac{r^2}{j}} \left( \frac{2\alpha}{j}+\frac{4}{j^2}r^2\big({\rm e}^{-\frac{r^2}{j}} -\alpha \big)\right) \\
\hphantom{|\grad h_j(r)|^2 -\alpha D_j^2h_j(r)}{} \leqslant \frac{2\alpha}{j}, \qquad \text{since} \quad {\rm e}^{-\frac{r^2}{j}}\leqslant 1.\tag*{\qed}
\end{gather*}\renewcommand{\qed}{}
\end{proof}

\begin{proof}[Proof of Theorem \ref{main1}]
We set $z=\frac{\alpha^2}{n}$, $A:=\frac{\kappa}{2}$ and $j_0:=\big\lceil\frac{4\alpha}{\kappa} \big\rceil$. Then from Lemma~\ref{lem20062020}~(iii), we have
\[ \Ric +\alpha D^2h_j\geqslant \kappa+ \alpha^2\frac{|\grad h_j|^2}{nz}-\frac{2\alpha}{j}
\geqslant \alpha^2\frac{|\grad h_j|^2}{nz}+A,\qquad \forall\, j\geqslant j_0.
\]

Combining inequality \eqref{eq20062020}, Lemma \ref{lem20062020} (i) and (ii), we finally get
\[ A\tilde{\lambda}\int_M u^2 \,\dm \leqslant \tilde{\lambda}^2\int_M u^2 c_0 \frac{1}{jc_0}\,\dm.\]
Hence, for every $j\geqslant j_0$, one has
$A j\leqslant\tilde{\lambda} $.
\end{proof}

\subsection*{Acknowledgements}

This paper is part of the author's Ph.D.~Thesis under the direction of Professor Bruno Colbois (Neuch\^atel University). The author wishes to express her thanks to her supervisor for suggesting the problem. She is also grateful to the anonymous referee whose suggestions and remarks greatly improved this paper.

\pdfbookmark[1]{References}{ref}
\LastPageEnding


\begin{thebibliography}{10}
\footnotesize\itemsep=0pt

\bibitem{berger2007panoramic}
Berger M., A panoramic view of {R}iemannian geometry, \href{https://doi.org/10.1007/978-3-642-18245-7}{Springer-Verlag}, Berlin,
 2003.

\bibitem{casey}
Blacker C., Seto S., First eigenvalue of the {$p$}-{L}aplacian on {K}\"ahler
 manifolds, \href{https://doi.org/10.1090/proc/14395}{\textit{Proc. Amer. Math. Soc.}} \textbf{147} (2019), 2197--2206,
 \href{https://arxiv.org/abs/1804.10876}{arXiv:1804.10876}.

\bibitem{Bochner1946}
Bochner S., Vector fields and {R}icci curvature, \href{https://doi.org/10.1090/S0002-9904-1946-08647-4}{\textit{Bull. Amer. Math.
 Soc.}} \textbf{52} (1946), 776--797.

\bibitem{Colbois2019}
Colbois B., El~Soufi A., Spectrum of the {L}aplacian with weights, \href{https://doi.org/10.1007/s10455-018-9621-5}{\textit{Ann.
 Global Anal. Geom.}} \textbf{55} (2019), 149--180, \href{https://arxiv.org/abs/1606.04095}{arXiv:1606.04095}.

\bibitem{colboissoufisavo}
Colbois B., El~Soufi A., Savo A., Eigenvalues of the {L}aplacian on a compact
 manifold with density, \href{https://doi.org/10.4310/CAG.2015.v23.n3.a6}{\textit{Comm. Anal. Geom.}} \textbf{23} (2015),
 639--670, \href{https://arxiv.org/abs/1310.1490}{arXiv:1310.1490}.

\bibitem{du2017}
Du F., Bezerra A.C., Estimates for eigenvalues of a system of elliptic
 equations with drift and of bi-drifting {L}aplacian, \href{https://doi.org/10.3934/cpaa.2017024}{\textit{Commun. Pure
 Appl. Anal.}} \textbf{16} (2017), 475--491.

\bibitem{du2016}
Du F., Mao J., Wang Q., Wu C., Eigenvalue inequalities for the buckling problem
 of the drifting {L}aplacian on {R}icci solitons, \href{https://doi.org/10.1016/j.jde.2015.12.006}{\textit{J.~Differential
 Equations}} \textbf{260} (2016), 5533--5564.

\bibitem{nier2005hypoelliptic}
Helffer B., Nier F., Hypoelliptic estimates and spectral theory for
 {F}okker--{P}lanck operators and {W}itten {L}aplacians, \textit{Lecture Notes
 in Math.}, Vol.~1862, \href{https://doi.org/10.1007/b104762}{Springer-Verlag}, Berlin, 2005.

\bibitem{huang2011}
Huang G., Zhang C., Zhang J., Liouville-type theorem for the drifting
 {L}aplacian operator, \href{https://doi.org/10.1007/s00013-011-0242-5}{\textit{Arch. Math. (Basel)}} \textbf{96} (2011),
 379--385.

\bibitem{LucSalam}
Kouzayha S., P\'etiard L., Eigenvalues of the {L}aplacian with density,
 \href{https://arxiv.org/abs/1908.05051}{arXiv:1908.05051}.

\bibitem{li2012perelman}
Li X.-D., Perelman's entropy formula for the {W}itten {L}aplacian on
 {R}iemannian manifolds via {B}akry--{E}mery {R}icci curvature, \href{https://doi.org/10.1007/s00208-011-0691-y}{\textit{Math.
 Ann.}} \textbf{353} (2012), 403--437.

\bibitem{lu2010}
Lu Z., Rowlett J., Eigenvalues of collapsing domains and drift {L}aplacians,
 \href{https://doi.org/10.4310/MRL.2012.v19.n3.a10}{\textit{Math. Res. Lett.}} \textbf{19} (2012), 627--648, \href{https://arxiv.org/abs/1003.0191}{arXiv:1003.0191}.

\bibitem{ma2008}
Ma L., Liu B., Convexity of the first eigenfunction of the drifting {L}aplacian
 operator and its applications, \textit{New York~J. Math.} \textbf{14} (2008),
 393--401.

\bibitem{xia2014}
Xia C., Xu H., Inequalities for eigenvalues of the drifting {L}aplacian on
 {R}iemannian manifolds, \href{https://doi.org/10.1007/s10455-013-9392-y}{\textit{Ann. Global Anal. Geom.}} \textbf{45} (2014),
 155--166.

\end{thebibliography}
\end{document}